\title[Equivalence of Weak Mean Curvature Flows]{Equivalence of weak solution concepts for mean curvature flow}
\author{Tim Laux}
\thanks{T.L.\ Heidelberg University, Im Neuenheimer Feld 205, 69120 Heidelberg, Germany. \nolinkurl{tim.laux@math.uni-heidelberg.de}}
\author{Anton Ullrich}
\thanks{A.U.\ Carnegie Mellon University, 5000 Forbes Avenue, 
Pittsburgh, PA 15213,
United States. \nolinkurl{aullrich@andrew.cmu.edu}}
\date{}
\definecolor{webgreen}{rgb}{0,.5,0}
\definecolor{webbrown}{rgb}{.6,0,0}
\definecolor{RoyalBlue}{cmyk}{1, 0.50, 0, 0}
\newcommand*{\dd}{\mathop{}\!\mathrm{d}}
\DeclareMathOperator*{\BV}{BV}
\DeclareMathOperator*{\esssup}{ess\,sup}
\DeclareMathOperator*{\supp}{supp}
\theoremstyle{plain}
\newtheorem{definition}{Definition}[section]
\newtheorem*{definition*}{Definition}
\newtheorem{proposition}[definition]{Proposition}
\newtheorem{theorem}[definition]{Theorem}
\newtheorem*{theorem*}{Theorem}
\newtheorem{remark}[definition]{Remark}
\newtheorem*{remark*}{Remark}
\newtheorem{lemma}[definition]{Lemma}
\newtheorem*{example*}{Example}
\newtheorem*{conjecture*}{Conjecture}
\pgfplotsset{compat=newest}
\begin{document}

\begin{abstract}
    
    We provide a connection between weak solution concepts of mean curvature flow.
    On the one side we have the viscosity solution which is based on the comparison principle. 
    On the other, variational solutions, which are combined Brakke flows and distributional solutions.
    We prove that if one has a foliation by variational solutions, then the resulting function is the unique viscosity solution. 
    
    This answers an open question suggested by the work of Evans and Spruck~[J.\ Geom.\ Anal., 5(1), 1995] and the authors~[J.\ Geom.\ Anal., 34(12), 2024].
    These results show that almost every level set of the viscosity solution is a variational solution. 
    Thus, we establish the equivalence of these solution concepts.
    Moreover, we show the generic uniqueness of variational solutions.
\end{abstract}

\keywords{Mean curvature flow, weak solution concepts, distributional solution, Brakke flow, viscosity solution.
\emph{MSC2020: 53E10 (Primary); 35D30; 35D40 (Secondary)}}

\maketitle

\section{Motivation}

Mean curvature flow is one of the most applied parabolic differential evolution equations. It appears naturally in almost every field of science. For instance, it finds applications in computer visualization, material sciences, mathematics, physics, biology, and machine learning. The flow is defined on submanifolds by relating the mean curvature to the negative normal velocity. This leads to a relaxation of the surface tension. Formally, this can also be seen as a gradient flow of the perimeter functional. 

\smallskip

In general, mean curvature flow can develop singularities in finite time even if started from a smooth initial datum. This shows the necessity of weak solution concepts. Some of the most used ones are viscosity solutions \cite{ChenGigaGoto, ESI}, which are based on the comparison principle, and distributional solutions \cite{luckhaus1995implicit} and Brakke flows \cite{Brakke}, which are based on the gradient flow structure.

\smallskip

Certain singularities result in non-uniqueness, but the relaxation of the solution concept should not lead to further non-uniqueness or instability issues. This famously fails for Brakke flows, which can instantly disappear. However, the viscosity solution \cite{ ChenGigaGoto, ESI} and distributional solutions~\cite{FHLS} are at least consistent with classical solution in the sense that they agree with the unique smooth mean curvature flow as long as the latter exists.
In the present work, we relate weak solution concepts past singularities. 
More precisely, we are interested in the comparison of the viscosity solution on the one hand to distributional solutions and Brakke flows on the other.

\smallskip

In previous works by Evans and Spruck \cite{ESIV} and the authors \cite{genericMCF}, it has been shown that these solution concepts partially imply each other. To be more precise, Evans and Spruck proved that if one considers the viscosity solution to level set mean curvature flow, then almost every level set of this function is a Brakke flow, cf.~\cite[Theorem 7.1]{ESIV}. More recently, the authors have shown that almost every level set of the viscosity solution is also a distributional solution, cf.~\cite[Theorem 1.1]{genericMCF}.
\smallskip

In this paper, we prove the converse in Theorem~\ref{Thm:MainThm} below, which can be restated informally as the following.

\begin{theorem*}[Reverse direction -- equivalence of solutions]
    Let $g$ be a well-prepared initial datum. If almost every level set of $g$ evolves as a distributional solution and a Brakke flow, then this evolution is the unique viscosity solution to mean curvature flow starting at $g$.
\end{theorem*}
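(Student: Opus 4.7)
The plan is to show that the level-set function $v$ built from the given foliation of variational solutions is both a viscosity sub- and super-solution of the level-set mean curvature flow equation, so that the comparison principle of \cite{ChenGigaGoto,ESI} forces $v=u$, the unique viscosity solution with $v(\cdot,0)=g$.

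As setup, I would define $v$ through $\{v(\cdot,t)>s\}=E_t^s$, where $E_t^s$ denotes the variational evolution of $\{g>s\}$ for almost every level $s$, and verify that the well-preparedness of $g$ together with an inclusion principle \emph{between} different levels of the foliation makes $v$ well-defined, continuous on $\mathbb{R}^n\times[0,\infty)$, and equal to $g$ at time zero. The non-crossing of different levels already needs the full variational structure: if two level sets met, the Brakke inequality of the inner one and the distributional equation of the outer one would combine to produce a contradiction, for example after smoothing out one of the two sets by a signed distance computation.

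For the sub-solution property, let $\phi\in C^\infty$ be such that $v-\phi$ attains a local maximum at $(x_0,t_0)$ with $v(x_0,t_0)=\phi(x_0,t_0)=s_0$, so that $\{v(\cdot,t)>s_0\}\subseteq\{\phi(\cdot,t)>s_0\}$ locally in time. For a density-one set of levels the boundary of $\{v(\cdot,t)>s_0\}$ carries the Brakke flow from the assumption, and a classical enclosure argument shows that a Brakke flow touching a smooth outer barrier forces the barrier to move no slower than mean curvature at the contact point. This yields the viscosity sub-solution inequality $\phi_t\le|\nabla\phi|\,\ddiv(\nabla\phi/|\nabla\phi|)$ at $(x_0,t_0)$, and a small vertical perturbation of $\phi$ lifts the argument to the exceptional null set of levels.

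The super-solution direction is the novel and more delicate part. Now $v-\phi$ has a local minimum at $(x_0,t_0)$, so $\{\phi(\cdot,t)>s_0\}\subseteq E_t^{s_0}$ locally, and the smooth inner hypersurface $\{\phi=s_0\}$ is enclosed by the set whose characteristic function satisfies the distributional mean curvature equation. Testing this weak identity against a cut-off of the signed distance to $\{\phi=s_0\}$ and carrying out a parabolic blow-up at $(x_0,t_0)$ should deliver the reverse pointwise inequality $\phi_t\ge|\nabla\phi|\,\ddiv(\nabla\phi/|\nabla\phi|)$. I expect the main obstacle to sit exactly here: the distributional equation only controls a Radon measure on the reduced boundary of $E_t^{s_0}$, and concentrating that information at a single smooth contact point likely requires a density lower bound on this measure -- probably extracted from the companion Brakke inequality -- together with a carefully engineered test vector field aligned with $\nabla\phi$ near $(x_0,t_0)$. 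Once both inequalities are established, the comparison principle delivers $v=u$ and the theorem follows.
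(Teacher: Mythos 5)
Your proposal takes a genuinely different route from the paper, and it has a real gap in the super-solution direction which you yourself flag.

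The paper never attempts to verify the viscosity sub- and super-solution inequalities for the reconstructed function $v$ by touching with smooth test functions. Instead, it proves a \emph{comparison principle for variational solutions} (Lemma~\ref{Lem:CompPrinSet}): two variational solutions whose initial data are strictly nested remain nested for a.e.\ time. The proof of that lemma combines the avoidance principle for Brakke flows (Proposition~\ref{Prop:Avoidance}) with the $L^1$-continuity in time of distributional solutions (Lemma~\ref{Lemma:L1ContBV}) and a topological argument; no contact-point analysis is needed. The lemma then serves twice: first to show the foliation of variational solutions stays nested, so the layer-cake formula defines a continuous function $v$; and second -- crucially -- to compare $v$ directly against the viscosity solution $u$. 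This second step exploits Theorems~\ref{Thm:ViscBV} and~\ref{Thm:ViscBrakke}, which say that a.e.\ level set of $u$ is \emph{itself} a variational solution, so $v$ and $u$ can be compared level-by-level entirely within the category of variational solutions. This is what makes the argument close cleanly: one never needs to localize at a smooth contact point.

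Your touching argument has two concrete problems. First, the division of labor -- ``sub-solution from the Brakke structure, super-solution from the distributional structure'' -- does not really work, because a Brakke flow may simply vanish. If the enclosed Brakke level set disappears an instant after $t_0$, the smooth outer barrier is unconstrained thereafter, and no pointwise inequality at $(x_0,t_0)$ follows. What rules this out is precisely the $L^1$-continuity in time of the distributional solution; so both weak-solution ingredients are needed in \emph{both} directions, not split between them. Second, and more seriously, the super-solution step you describe -- testing the weak identity against a cut-off of a signed distance and performing a parabolic blow-up -- is where the difficulty concentrates, as you note. The distributional equation only provides a Radon-measure identity on the reduced boundary; to extract a pointwise differential inequality at a single contact point you would need density lower bounds, a monotonicity or $\varepsilon$-regularity input, and a careful passage to the limit, none of which is routine. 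You are right to call it ``the main obstacle,'' but as written it is a gap, not a proof. The paper's comparison-with-$u$ strategy sidesteps exactly this localization problem, which is why it is the right move here.
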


Therefore, such a foliation by Brakke flows that are also distributional solutions is in fact equivalent to the viscosity solution.

Furthermore, as a consequence of this, we obtain that generically an evolution that is a Brakke flow and a distributional solution is unique, cf.\ Theorem~\ref{Thm:UniqueVarSol}.

\smallskip

The equivalence of comparison based weak solutions with those based on energy method has been known for a while in the context of partial differential equations. Ishii~\cite{Ishii:ontheequivalence} showed that viscosity solutions and distributional solutions are equivalent in the case of linear degenerately elliptic equations; see also~\cite{JuutinenLindqvistManfredi} for extensions to the $p$-Laplace equation. 
However, in the case of mean curvature flow, this equivalence is more subtle and can only hold \emph{generically} as we show in examples below.

\smallskip

The rest of the paper is organized as follows.
In Section~\ref{Sec:Preliminaries}, we give the precise definitions of the weak solution concepts and list the main properties and results related to them. These results are based on \cite{ChenGigaGoto, ESIV, genericMCF, white2024avoidance}. Afterwards, in Section~\ref{Sec:ProofMainThm}, we prove our main theorem, Theorem~\ref{Thm:MainThm}. The main idea here is to combine the avoidance principle of Brakke flows with the continuity of distributional solutions. Together, this implies a comparison principle, Lemma~\ref{Lem:CompPrinSet}. This then can be used to construct a function from the evolutions that also satisfies the comparison principle. Since viscosity solutions are also distributional solutions, Theorem~\ref{Thm:ViscBV}, and Brakke flows, Theorem~\ref{Thm:ViscBrakke}, this then implies that they have to coincide.

\section{Preliminaries}
\label{Sec:Preliminaries}

The following definitions are taken from the thesis of one of the authors,~\cite{Ullrich_2025}.
We begin with defining the notion of a smooth solution to level set mean curvature flow.

\begin{definition}[Level set mean curvature flow]
We say that a function $u\in C^2(\mathbb{R}^d\times [0,\infty))$ is a solution to level set mean curvature flow starting at $g\in C^2(\mathbb{R}^d)$ if
\begin{align}
    \begin{cases}
    \partial_t u=|\nabla u|\nabla\cdot \frac{\nabla u}{|\nabla u|} &\text{ in }\mathbb{R}^d\times \mathbb{R}_{>0}, |\nabla u|\neq 0,\\
    \partial_t u=0  &\text{ on }\mathbb{R}^d\times \mathbb{R}_{>0}, |\nabla u|=0,\\
    u=g&\text{ on }\mathbb{R}^d\times \{t=0\}.
    \end{cases}
    \label{Eq:Levelset}
\end{align}
\end{definition}

\begin{remark*}
    The mean curvature operator has different equivalent representations as a normalized $1$-Laplacian or the level set Laplacian which can be useful depending on the context:
    $$|\nabla u|\nabla\cdot \frac{\nabla u}{|\nabla u|}=\Delta u-\frac{\nabla u}{|\nabla u|}\cdot \nabla^2u \frac{\nabla u}{|\nabla u|}=\nabla^2 u:(\text{Id}-\nu_u\otimes \nu_u).$$
\end{remark*}

The concept of viscosity solutions is based on the comparison principle. Intuitively, one squeezes the continuous solution $u$ from above and below in smooth functions $\varphi$ which evolve by mean curvature flow in the correct differential inequality.

\begin{definition}[Viscosity solution] 
\label{Def:ViscositySolution}
A continuous function $u\in C(\mathbb{R}^d\times[0,T))$ with initial datum $g\in C(\mathbb{R}^d)$ whose level sets are compact is called a viscosity super-solution to \eqref{Eq:Levelset} if, for any $\varphi\in C^\infty(\mathbb{R}^d\times[0,T))$ and $(x_0,t_0)\in \mathbb{R}^d\times(0,T)$ such that $u-\varphi$ has a local minimum at $(x_0,t_0)$, the following inequality holds,
\begin{enumerate}[label=\roman*)]
    \item for non-critical points $\nabla \varphi(x_0,t_0)\neq 0:$
    $$\partial_t \varphi\geq \Delta \varphi-\frac{\nabla \varphi}{|\nabla \varphi|}\cdot \nabla^2\varphi\frac{\nabla \varphi}{|\nabla \varphi|}\quad\text{at }(x_0,t_0)$$
    \item and for critical points $\nabla \varphi(x_0,t_0)=0$ there exists $\xi\in\mathbb{R}^d, |\xi|\leq 1:$
    $$\partial_t \varphi\geq \Delta \varphi-\xi\cdot \nabla^2\varphi \xi\quad\text{at }(x_0,t_0).$$
\end{enumerate}
We say that $u$ is a viscosity sub-solution if $-u$ is a viscosity super-solution. Finally, $u$ is called a viscosity solution if it is both a viscosity sub- as well as a viscosity super-solution.
\end{definition}

The concept of distributional solutions is based on test functions to formulate the mean curvature equation $V=-H$ in a distributional form.

\begin{definition}[$\BV$ solution, distributional solution, cf.~\cite{luckhaus1995implicit}]
\label{Def:BVSol}
A family of finite perimeter sets $(\Omega(t))_{t\in [0, T)}$ such that $\bigcup\limits_{t\in [0,T)}\Omega(t)$ is measurable in space and time is called distributional solution to mean curvature flow if the sets admit a uniform perimeter bound
\begin{align}
    \esssup\limits_{t\in [0,T)}P(\Omega(t))&<\infty\label{Eq:LSP}
\end{align}
and there is a $|\mu_{\Omega(t)}|\dd t$-measurable $L^2$-bounded function $V\colon\mathbb{R}^d\times (0,T)\to \mathbb{R}$
\begin{align}
    \int\limits_{\mathbb{R}^d\times (0,T)}V^2\dd |\mu_{\Omega(t)}|\dd t&<\infty\label{Eq:LSV}
\end{align}
such that the following holds:
\begin{enumerate}[label=\roman*)]
    \item The function $V(\cdot, t)$ is the normal velocity of $\Omega(t)$ in the sense that for each test function $\zeta\in C_c^1(\mathbb{R}^d\times [0,T))$ it satisfies
    \begin{align}
        \int\limits_0^T\int\limits_{\Omega(t)}\partial_t\zeta\dd x\dd t&=-\int\limits_{\mathbb{R}^d\times(0,T)}\zeta V\dd |\mu_{\Omega(t)}|\dd t-\int\limits_{\Omega_0}\zeta(x, 0)\dd x\label{Eq:LSdistV}.
    \end{align}
    \item For any test vector field $\xi\in C_c^1(\mathbb{R}^d\times(0,T);\mathbb{R}^d)$ it holds
    \begin{align}
        \int\limits_{\mathbb{R}^d\times(0,T)} (\nabla\cdot \xi-\nu\cdot \nabla \xi\nu) \dd |\mu_{\Omega(t)}|\dd t&=-\int\limits_{\mathbb{R}^d\times(0,T)} V\xi\cdot \nu\dd |\mu_{\Omega(t)}|\dd t.\label{LSdistMC}
    \end{align}
\end{enumerate}
\end{definition}

Similarly to distributional solutions, Brakke flows generalize the evolving manifolds to define a weak solution concept for mean curvature flow. Where distributional solution represented the manifold as the reduced boundary of a set, Brakke flows generalize the manifolds to varifolds. This is then combined with a weak definition of the mean curvature via the first variation of the varifold and a gradient flow inequality.

\begin{definition}[Brakke flow]
    \label{Def:BrakkeFlow}
    An $m$-dimensional integral Brakke flow is a family of integral $m$ dimensional varifolds $\mathcal{V}_t$ in $\mathbb{R}^d$ with associated mass measure $\mu(t)$ such that for a.e.\ $t,t_1,t_2\in [0,T)$ with $t_1<t_2$
    \begin{enumerate}
        \item The first variation can be representated by a locally $L^2$-integrable function $\vec{H}(\cdot,t)\in L^2_{loc}(\mu(t))$ such that for all $X\in C^1_c(\mathbb{R}^d)$
        $$\delta \mathcal{V}_t(X)=\int\limits_{\mathbb{R}^d} \vec{H}(\cdot,t)\cdot X\dd\mu(t),$$
        \item For $f\in C_c^1(\mathbb{R}^d\times [t_1,t_2];\mathbb{R}_{\geq 0})$, we have
        $$\int\limits_{\mathbb{R}^d} f(\cdot, t_2)\dd \mu(t_2)+\int\limits_{t_1}^{t_2} \int\limits_{\mathbb{R}^d} \left(|\vec{H}|^2f+\vec{H}\cdot\nabla f-\partial_t f\right)\dd\mu(t)\dd t\leq \int\limits_{\mathbb{R}^d} f(\cdot, t_1)\dd \mu(t_1).$$
    \end{enumerate}
\end{definition}

Now, that we have defined the weak solution concepts, we relate them to each other. For this, we need to restrict the class of initial data.

\begin{definition}[Well-prepared initial datum {\cite{ESIV}}]
\label{def:wellprepared}
We call an initial level set datum $g$ \emph{well-prepared} if $g\in C^3(\mathbb{R}^d)$, $g$ is constant outside of some large ball, and $g$ satisfies
\begin{align}
    \sup_{0<\varepsilon\leq 1} \int\limits_{\mathbb{R}^d} \Big|\nabla \cdot \Big(\frac{\nabla g}{\sqrt{|\nabla g|^2 + \varepsilon^2}}\Big) \Big| \dd x <\infty.
\end{align}
\end{definition}

\begin{remark*}
    By \cite[Lemma 2.1]{ESIV}, one can construct a well-prepared initial datum $g$
for any given compact smooth $0$-level set.
\end{remark*}

\begin{theorem}[Viscosity solutions are distributional solutions, {\cite[Theorem 1.1]{genericMCF}}]
\label{Thm:ViscBV}
Let $u\in C(\mathbb{R}^d\times [0,T))$ be a viscosity solution to mean curvature flow in the sense of Definition~\ref{Def:ViscositySolution} with well-prepared initial data according to Definition~\ref{def:wellprepared}. 
Then almost every level set of $u$ is a BV solution to mean curvature flow in the sense of Definition~\ref{Def:BVSol}.
\end{theorem}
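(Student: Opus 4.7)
The plan is to use the Evans--Spruck approximation scheme. We approximate the viscosity solution $u$ by smooth solutions $u^\varepsilon$ of the non-degenerate parabolic regularization
\begin{align*}
\partial_t u^\varepsilon = \sqrt{|\nabla u^\varepsilon|^2 + \varepsilon^2}\,\nabla\cdot\left(\frac{\nabla u^\varepsilon}{\sqrt{|\nabla u^\varepsilon|^2 + \varepsilon^2}}\right),\qquad u^\varepsilon|_{t=0}=g,
\end{align*}
which admit global smooth solutions, and by the classical theory of Evans--Spruck converge locally uniformly to $u$ as $\varepsilon\downarrow 0$. Denoting the regularized mean curvature by $H^\varepsilon:=\nabla\cdot(\nabla u^\varepsilon/\sqrt{|\nabla u^\varepsilon|^2+\varepsilon^2})$, a parabolic comparison argument originally due to Evans--Spruck, combined with the well-prepared hypothesis, yields the uniform bound $\sup_{\varepsilon,t}\int_{\mathbb{R}^d}|H^\varepsilon(\cdot,t)|\,dx<\infty$. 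Testing the equation by $H^\varepsilon/\sqrt{|\nabla u^\varepsilon|^2+\varepsilon^2}$ in turn gives the dissipation identity, yielding $\sup_\varepsilon\int_0^T\int_{\mathbb{R}^d}|H^\varepsilon|^2\sqrt{|\nabla u^\varepsilon|^2+\varepsilon^2}\,dx\,dt<\infty$.

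The second step slices these integrated quantities along level sets using the coarea formula, which converts them into $\int_{\mathbb{R}}P(\{u^\varepsilon(\cdot,t)>s\})\,ds$ and $\int_{\mathbb{R}}\int_0^T\int_{\{u^\varepsilon=s\}}|H^\varepsilon|^2\,d\mathcal{H}^{d-1}\,dt\,ds$, respectively. Hence, after a diagonal extraction and Fatou's lemma, for almost every level $s\in\mathbb{R}$ there exists a subsequence along which $\{u^\varepsilon(\cdot,t)>s\}$ has uniformly bounded perimeter and carries a uniformly $L^2$-bounded velocity $V^\varepsilon:=-\partial_t u^\varepsilon/\sqrt{|\nabla u^\varepsilon|^2+\varepsilon^2}$ against the surface measure on $\{u^\varepsilon=s\}$. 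Combined with local uniform convergence $u^\varepsilon\to u$ and the fact that $|\{u(\cdot,t)=s\}|=0$ for a.e.\ $s$, BV compactness then provides $L^1$-convergence of the indicator functions $\mathbf{1}_{\{u^\varepsilon>s\}}\to\mathbf{1}_{\{u>s\}}$ for these generic levels.

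Finally, one passes to the limit in the two distributional identities \eqref{Eq:LSdistV} and \eqref{LSdistMC}. Since the level sets of each $u^\varepsilon$ evolve smoothly, both identities hold classically at the approximate level; weak compactness of the varifolds induced by the reduced boundaries, together with lower semicontinuity of the velocity energy, then transfers them to $\Omega(t)=\{u(\cdot,t)>s\}$. The main obstacle will be the identification of a single velocity $V$ that simultaneously serves in \eqref{Eq:LSdistV} and \eqref{LSdistMC}. The formal expression $V=-\partial_t u/|\nabla u|$ is ill-defined where $|\nabla u|$ vanishes, so $V$ must be extracted as the weak limit of $V^\varepsilon$ against the reduced boundary measures, and one must verify that this limit is consistent in both equations for $s$ in a set of full measure. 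This is precisely where the well-prepared hypothesis is essential, since it prevents pathological initial behaviour and, together with the continuity of $u$, controls the fattening of level sets so that the exceptional values of $s$ form a Lebesgue null set.
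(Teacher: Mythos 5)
This theorem is not proved in the paper under review: it is imported verbatim as \cite[Theorem~1.1]{genericMCF}, so there is no in-paper argument to compare against, only the citation. What you sketch is, however, essentially the strategy of the cited paper (which follows Evans--Spruck): regularize, derive uniform $\varepsilon$-independent estimates from well-preparedness, slice by the coarea formula, and pass to the limit for generic level values. So the route is the right one.

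Three technical points deserve care. First, the uniform perimeter bound \eqref{Eq:LSP} does not come from the $L^1$-bound on $H^\varepsilon$; it comes from the fact that, by comparison with spatial translates, $u^\varepsilon$ inherits the Lipschitz constant of $g$ and, by comparison with constants, remains constant outside a fixed ball, so $\int_{\mathbb{R}^d}|\nabla u^\varepsilon|\,dx$ is uniformly bounded and the coarea formula yields $\int_{\mathbb{R}} P(\{u^\varepsilon>s\})\,ds <\infty$. The uniform $L^1$-bound on $H^\varepsilon$ plays a different (and essential) role: it controls the first variation of the limiting varifolds and prevents loss of surface measure in the limit, which is what makes the curvature term in \eqref{LSdistMC} survive. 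Second, the dissipation identity arises from testing the regularized equation with $H^\varepsilon$ itself (equivalently $\partial_t u^\varepsilon/\sqrt{|\nabla u^\varepsilon|^2+\varepsilon^2}$), which yields $\frac{d}{dt}\int\big(\sqrt{|\nabla u^\varepsilon|^2+\varepsilon^2}-\varepsilon\big)\,dx = -\int|H^\varepsilon|^2\sqrt{|\nabla u^\varepsilon|^2+\varepsilon^2}\,dx$; your stated test function would produce $\int|H^\varepsilon|^2\,dx$ instead, which is not the quantity whose coarea slicing gives the surface integral $\int_{\{u^\varepsilon=s\}}|H^\varepsilon|^2\,d\mathcal{H}^{d-1}$. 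Third, you correctly single out the identification of a single velocity $V$ working for both \eqref{Eq:LSdistV} and \eqref{LSdistMC} as the crux of the argument; in the cited paper this is resolved by showing, for a.e.\ level, that the limiting space-time surface measure has multiplicity one (no fattening, no mass drop) so that $V$ is well defined as a weak limit against $|\mu_{\Omega(t)}|\,dt$ and coincides with the limit of $-\partial_t u^\varepsilon/\sqrt{|\nabla u^\varepsilon|^2+\varepsilon^2}$. These refinements do not change your overall scheme, which is the correct one.
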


\begin{theorem}[Viscosity solutions are Brakke flows {\cite[Theorem 7.1]{ESIV}}]
    \label{Thm:ViscBrakke}
    Let $u(x,t)$ be a viscosity solution of mean curvature flow starting from a well-prepared initial datum $g$. Then almost every level set of $u$ is as a unit density Brakke flow.
\end{theorem}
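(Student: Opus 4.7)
The plan is to follow the approximation strategy of Evans and Spruck: regularize the degenerate level-set PDE to a nondegenerate parabolic equation admitting classical solutions, reinterpret its level sets as classical mean curvature flows in one higher dimension, and then pass to the limit in the Brakke inequality, with the well-preparedness of $g$ furnishing the uniform area and curvature bounds needed.

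First I would consider, for each $\varepsilon > 0$, the regularized equation
\begin{equation*}
\partial_t u^\varepsilon = \sqrt{|\nabla u^\varepsilon|^2+\varepsilon^2}\;\nabla \cdot \frac{\nabla u^\varepsilon}{\sqrt{|\nabla u^\varepsilon|^2+\varepsilon^2}}\quad\text{in }\mathbb{R}^d\times(0,T),\qquad u^\varepsilon(\cdot,0)=g,
\end{equation*}
which is uniformly parabolic and, for $g$ well-prepared, admits a unique smooth solution $u^\varepsilon$ satisfying $u^\varepsilon \to u$ locally uniformly by viscosity stability. The geometric content is revealed by the Evans--Spruck lift $v^\varepsilon(x,y,t):=u^\varepsilon(x,t)-\varepsilon y$ on $\mathbb{R}^{d+1}\times[0,T)$: a direct computation shows that $v^\varepsilon$ solves nondegenerate level-set MCF on $\mathbb{R}^{d+1}$ (since $|\nabla v^\varepsilon|^2=|\nabla u^\varepsilon|^2+\varepsilon^2>0$), so each $\Gamma^\varepsilon_s(t):=\{v^\varepsilon(\cdot,\cdot,t)=s\}$ is a smooth graphical $d$-dimensional hypersurface in $\mathbb{R}^{d+1}$ evolving by classical mean curvature flow, hence a smooth unit-density Brakke flow.

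Second, I would convert the well-preparedness of $g$ into uniform-in-$\varepsilon$ a priori bounds. The well-prepared hypothesis combined with the coarea formula gives a uniform bound on the initial area $\mathcal{H}^d(\Gamma^\varepsilon_s(0))$ and an $L^2$ bound on the initial mean curvature for a.e.\ $s\in\mathbb{R}$. The energy identity of classical MCF, $\tfrac{d}{dt}\mathcal{H}^d(\Gamma^\varepsilon_s(t))=-\int_{\Gamma^\varepsilon_s(t)}|\vec{H}|^2\,d\mathcal{H}^d$, then propagates these bounds to every $t\in(0,T)$. Standard compactness for integral varifolds yields, along a subsequence and for a.e.\ $s$, varifold convergence $\mathcal{V}^\varepsilon_s(t)\to\mathcal{V}_s(t)$ for a.e.\ $t$; Brakke's inequality passes to the limit by lower semicontinuity of the $|\vec{H}|^2$-term combined with weak convergence of the mass measures.

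The final and most delicate step is to descend from the lifted Brakke flows in $\mathbb{R}^{d+1}$ back to $\mathbb{R}^d$ and to prove unit multiplicity. Slicing $\Gamma^\varepsilon_s(t)$ by the hyperplane $\{y=0\}$ recovers $\{u^\varepsilon(\cdot,t)=s\}\subset\mathbb{R}^d$, and since $u^\varepsilon\to u$ uniformly, one has $\mathbf{1}_{\{u^\varepsilon(\cdot,t)>s\}}\to\mathbf{1}_{\{u(\cdot,t)>s\}}$ in $L^1_{\mathrm{loc}}$ for every $s$; combined with the uniform perimeter bounds, this identifies the weak limit of the perimeter measures with $|\mu_{\{u(\cdot,t)>s\}}|$ for a.e.\ $s$. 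The hard part is excluding multiplicity larger than one in the varifold limit: one must match the mass $\|\mathcal{V}_s(t)\|$ with the perimeter $|\mu_{\{u(\cdot,t)>s\}}|$, and the coarea formula supplies $\|\mathcal{V}_s(t)\|\geq|\mu_{\{u(\cdot,t)>s\}}|$ while lower semicontinuity of the perimeter gives the reverse inequality, forcing the multiplicity function to equal one $\mathcal{H}^{d-1}$-almost everywhere on the reduced boundary. This yields unit density and concludes the proof.
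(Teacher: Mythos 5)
This theorem is not proved in the paper: it is taken as a black box and cited verbatim from Evans and Spruck, \emph{Motion of level sets by mean curvature IV} (Theorem~7.1 of that paper). So there is no ``paper's own proof'' to compare against; the role this statement plays here is purely as an imported ingredient for the proof of the main Theorem~\ref{Thm:MainThm}.

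That said, your sketch does follow the broad strategy of the Evans--Spruck proof---parabolic regularization, the lift $v^\varepsilon(x,y,t)=u^\varepsilon(x,t)-\varepsilon y$ to a smooth graphical mean curvature flow in $\mathbb{R}^{d+1}$, varifold compactness, and a unit-density argument via coarea. Two points are glossed over in a way that hides real work. First, the well-preparedness hypothesis gives a uniform $L^1$ bound on the (regularized) mean curvature at $t=0$, not an $L^2$ bound; the $L^2$ bound on $\vec H$ that the Brakke inequality requires is recovered for a.e.\ $t>0$ from the gradient-flow structure (the dissipation term in the energy identity), and one must be careful that the energy identity itself is justified on the non-compact lifted graphs. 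Second, in your multiplicity argument the two inequalities are pointing the same way: both ``varifold mass $\ge$ perimeter'' (which holds for any integral varifold whose support contains the reduced boundary) and lower semicontinuity of perimeter under $L^1$ convergence give $\lVert\mathcal V_s(t)\rVert\ge \lvert\mu_{\{u(\cdot,t)>s\}}\rvert$. The genuinely hard direction, $\lVert\mathcal V_s(t)\rVert\le \lvert\mu_{\{u(\cdot,t)>s\}}\rvert$ for a.e.\ $s$, is obtained by an integrated coarea argument: one shows $\int\lVert\mathcal V_s(t)\rVert\dd s\le\int_{\mathbb{R}^d}\lvert\nabla u(\cdot,t)\rvert\dd x=\int\lvert\mu_{\{u(\cdot,t)>s\}}\rvert\dd s$, which, combined with the pointwise lower bound, forces equality (hence unit density) for a.e.\ $s$. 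As written, your sketch does not supply this upper bound, which is the crux of Evans and Spruck's unit-density theorem.
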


Based on the previous two theorems, we define the concept of a variational solution.
\begin{definition}[Variational solution]
\label{Def:VarSol}
    We call the evolution of a measurable set $\Omega(t)$ a variational solution if $\Omega(t)$ is a distributional solution and $\Sigma(t)\coloneqq\partial^*\Omega(t)$ is a Brakke flow. 
\end{definition}

\begin{remark}
\label{Rem:Boundary}
    Due to the previous theorems, one can show that almost every level set of a viscosity solution with well-prepared initial datum is a variational solution.

    The only thing that remains to show is that the level sets $\Sigma(t)$ are the reduced boundary of the corresponding sub level sets $\Omega(t)$, i.e.,
    $$\Sigma(t)=\partial^*\Omega(t)$$ for almost all times $t$. Here, we can use that the viscosity solution with well-prepared initial datum is Lipschitz continuous in space and time by the comparison principle. Therefore, one can apply~\cite[Theorem 2.5]{albertistructure}. This implies that $$\mathcal{H}^{d}(\{\nabla u(\cdot,\cdot)=0\}\cap\{u(\cdot,\cdot)=s\})=0$$ 
    and $\Sigma_s$ is $d$-rectifiable (in time and space) with $\mathcal{H}^{d}(\Sigma_s)<\infty$. Moreover, the derivative $\nabla u$ exists $\mathcal{H}^d$-almost everywhere on $\Sigma_s$ and is non zero. Thus, for almost all times $t$ and level sets $s$ we indeed have $\partial^*\Omega(t)=\Sigma(t)$.
\end{remark}

The key properties which are essential for the proof are the comparison and avoidance properties as well as an $L^1$-continuity.

\begin{proposition}[Comparison principle for viscosity solutions {\cite{ChenGigaGoto}}]
    \label{Prop:MaxPrinciple}
    Let $u,v\in C(\mathbb{R}^d\times [0,T))$ be viscosity solutions with two initial condition $u_0,v_0\in C(D)$ such that $u_0\leq v_0$. Then, the evolutions remain ordered for all times $t \geq 0$, i.e., the evolutions satisfy $u\leq v$.
\end{proposition}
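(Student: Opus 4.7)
The plan is to argue by contradiction using the standard doubling-of-variables technique from viscosity solution theory, adapted to the singular level-set operator. Suppose $\sup_{\mathbb{R}^d\times[0,T)}(u-v)>0$. Since both $u$ and $v$ are continuous with compact level sets and $u_0\le v_0$, this supremum is attained at some positive time on a compact region (after subtracting a small $\eta/(T-t)$ to force the supremum to be attained away from $t=T$ and truncating at infinity using the fact that both solutions are constant at spatial infinity in the well-prepared setting). Call the (positive) supremum $m>0$.

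Next I would introduce the doubled functional
\begin{equation*}
\Phi_{\varepsilon,\sigma,\alpha}(x,y,t,s)=u(x,t)-v(y,s)-\frac{|x-y|^{2}}{2\varepsilon}-\frac{(t-s)^{2}}{2\sigma}-\alpha(|x|^{2}+|y|^{2})-\frac{\eta}{T-t}-\frac{\eta}{T-s},
\end{equation*}
and locate a maximum point $(\hat x,\hat y,\hat t,\hat s)$. Standard arguments (see Crandall--Ishii--Lions) show that, as $\alpha,\sigma\to0$ followed by $\varepsilon\to0$, one has $|\hat x-\hat y|^{2}/\varepsilon\to 0$, $(\hat t-\hat s)^{2}/\sigma\to 0$, and $u(\hat x,\hat t)-v(\hat y,\hat s)\to m$ at a positive limiting time. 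The vectors $p_{\varepsilon}:=(\hat x-\hat y)/\varepsilon$ and time derivatives $(\hat t-\hat s)/\sigma$ then serve as the first-order information of the respective test functions for $u$ (as a subsolution) and $v$ (as a supersolution).

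The main obstacle is the singular dependence of the mean curvature operator on $\nabla u/|\nabla u|$ when the gradient vanishes. This is precisely the reason Definition~\ref{Def:ViscositySolution} splits into cases (i) and (ii): at a critical point of the test function the operator is replaced by the envelope over all unit vectors $\xi$. I would treat two scenarios separately. If $p_{\varepsilon}\neq 0$, the Crandall--Ishii lemma yields symmetric matrices $X,Y$ with $X\le Y$ and with $X,Y$ bounded in a controlled way by $1/\varepsilon$; subtracting the subsolution and supersolution inequalities and using the key degenerate-elliptic monotonicity
\begin{equation*}
F(p,X)-F(p,Y)\le 0\quad\text{whenever}\quad X\le Y
\end{equation*}
for $F(p,X)=\operatorname{tr}\bigl((\Id-p\otimes p/|p|^{2})X\bigr)$ yields the inequality $0\le -2\eta/(T-\hat t)^{2}$, a contradiction. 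If $p_{\varepsilon}\to 0$ along the relevant subsequence, I would use the envelope in (ii): the relaxed operator is still degenerate elliptic and upper/lower semicontinuous in $p$ in the appropriate sense (this is the content of the Chen--Giga--Goto reformulation), so the same monotonicity yields the same contradiction.

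Since the contradiction forces $m\le 0$, we conclude $u\le v$ everywhere. Sending $\eta\to0$ at the end removes the penalization at $t=T$. The only genuinely non-trivial analytic input is the degenerate-elliptic monotonicity together with the upper/lower semicontinuous envelopes of the singular operator, which is exactly the framework set up by Chen--Giga--Goto~\cite{ChenGigaGoto} (and independently by Evans--Spruck~\cite{ESI}); all the rest of the argument is the standard Ishii comparison machinery for parabolic degenerate equations.
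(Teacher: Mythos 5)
The paper does not actually prove this proposition; it is stated as a known result and cited directly from Chen--Giga--Goto~\cite{ChenGigaGoto}, so there is no ``paper's own proof'' to compare against. Your sketch follows the standard Crandall--Ishii--Lions/Chen--Giga--Goto strategy and gets the non-critical case right, but it has a genuine gap precisely where the argument is non-standard, namely in the critical case $p_\varepsilon\to0$.

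The problem is the quadratic doubling penalty $|x-y|^2/(2\varepsilon)$. In the non-critical case, the Crandall--Ishii matrix inequality gives $X\le Y$ and the gradients of the two test functions agree, so $F(p,X)-F(p,Y)=\tr\bigl((\Id-\hat p\otimes\hat p)(X-Y)\bigr)\le 0$ and you are done. But in the critical case the two test functions have vanishing gradient and you must compare the semicontinuous envelopes: you need $F^*(0,X)-F_*(0,Y)\le 0$, that is,
\[
\bigl(\tr X-\lambda_{\min}(X)\bigr)-\bigl(\tr Y-\lambda_{\max}(Y)\bigr)\le 0.
\]
This does \emph{not} follow from $X\le Y$ alone (take $X=Y=\mathrm{diag}(1,-1)$: the left side is $2-(-2)=4>0$). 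What you actually need is $X\le 0\le Y$, and the quadratic penalty does not produce that when $\hat x=\hat y$; its Hessian $\Id/\varepsilon$ does not vanish. This is exactly why Chen--Giga--Goto (and Evans--Spruck, and the standard geometric-PDE literature) replace $|x-y|^2/(2\varepsilon)$ by a penalty that is fourth-order near the diagonal, e.g.\ $|x-y|^4/(4\varepsilon)$. With the quartic penalty, when $\hat x=\hat y$ the penalty has zero gradient \emph{and} zero Hessian at the max point, and the Crandall--Ishii lemma then yields $X\le 0\le Y$, from which $\tr X-\lambda_{\min}(X)\le 0\le\tr Y-\lambda_{\max}(Y)$ and the contradiction follows. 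Your invocation of ``the relaxed operator is still degenerate elliptic and upper/lower semicontinuous, so the same monotonicity yields the same contradiction'' glosses over this: semicontinuity and degenerate ellipticity of the envelopes are necessary but not sufficient, and the inequality you actually need uses the stronger positional information that only the modified penalty provides. Replacing the quadratic spatial penalty by a quartic one (keeping the rest of the machinery) closes the gap and recovers the Chen--Giga--Goto proof.
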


\begin{proposition}[Avoidance principle for Brakke flows {\cite[Theorem 10.6]{IlmanenBook}, \cite[Theorem 1]{white2024avoidance}}]
    \label{Prop:Avoidance}
    Let $\Sigma(t),\widetilde\Sigma(t)$ be Brakke flows with initial conditions $\Sigma_0,\widetilde\Sigma_0$ that are disjoint. The evolution then fulfills the avoidance principle, i.e., for all times $t \geq 0$, the sets are disjoint. Moreover, it holds
    $$d(\Sigma(t),\widetilde\Sigma(t))\geq d(\Sigma_0,\widetilde\Sigma_0).$$
\end{proposition}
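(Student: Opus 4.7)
The plan is to establish the quantitative avoidance estimate via a shrinking-ball barrier combined with Brakke's inequality, following the classical strategy of Ilmanen \cite{IlmanenBook} and its sharpening in \cite{white2024avoidance}. The key observation is that the Brakke inequality acts as a ``supersolution property'' for $\int f\dd\mu(t)$, so constructing a single test function $f$ with suitable monotonicity already forces disjointness from a barrier region.

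\textbf{Step 1 (Static shrinking-ball barrier).} Fix any $p\in\mathbb{R}^d$ and $R>0$ with $B(p,R)\cap\Sigma_0=\emptyset$. I would take $f(x,t)=\eta\bigl(R^2-2mt-|x-p|^2\bigr)$ with $\eta\in C^\infty(\mathbb{R};\mathbb{R}_{\geq 0})$ a smooth non-decreasing cutoff supported on $[0,\infty)$. A direct pointwise computation on any $m$-plane $P$ (using that a round $m$-sphere of radius $\rho$ has $|\vec H|=m/\rho$ and shrinks at rate $\dot\rho=-m/\rho$) yields the key inequality
\[
\partial_t f \;\leq\; |\vec H|^2 f + \vec H\cdot\nabla f
\]
on the support of $f$. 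Plugging $f$ into the Brakke inequality of Definition~\ref{Def:BrakkeFlow} then gives $\int f(\cdot,t)\dd\mu(t)\leq \int f(\cdot,0)\dd\mu(0)=0$, so $\Sigma(t)\cap B\bigl(p,\sqrt{R^2-2mt}\bigr)=\emptyset$ for all $t<R^2/(2m)$.

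\textbf{Step 2 (Iteration and upgrade to two-sided avoidance).} The Step 1 barrier treats $\widetilde\Sigma$ as a static obstacle centred at a fixed point $p$, and by itself only produces a decaying bound as $t$ grows. To recover the full estimate $d(\Sigma(t_0),\widetilde\Sigma(t_0))\geq d_0$, I would iterate on short time intervals: for fixed $t_0$ and $y_0\in\widetilde\Sigma(t_0)$, partition $[0,t_0]$ into subintervals of mesh $\delta$ and, on each subinterval $[s,s+\delta]$, apply the Step 1 barrier with $p$ a nearby point of $\widetilde\Sigma(s)$. Chaining the resulting one-step estimates and letting $\delta\to 0$ produces $d(y_0,\Sigma(t_0))\geq d_0$, after which taking $y_0$ arbitrary yields the claim.

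\textbf{Main obstacle.} The delicate point is the iteration in Step 2: a Brakke flow is only continuous as a family of Radon measures, not pointwise, so the approximation of $y_0\in\widetilde\Sigma(t_0)$ by a point of $\widetilde\Sigma(s)$ is subtle and requires either local regularity of the Brakke flow or a doubling-of-variables trick on the product space $\mathbb{R}^d\times\mathbb{R}^d$. The latter approach — effectively passing to the product varifold, where the diagonal plays the role of a static barrier and the function $|x-y|$ replaces $|x-p|$ — is cleanly executed in \cite{white2024avoidance} and is what yields the sharp conclusion, bypassing the pointwise continuity issue entirely. Since the result is already available in the literature in the form stated, I would simply invoke it rather than reproduce the full argument.
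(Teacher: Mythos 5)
The paper does not give a proof of Proposition~\ref{Prop:Avoidance}: it is imported verbatim from the literature (\cite[Theorem 10.6]{IlmanenBook}, \cite[Theorem 1]{white2024avoidance}), which is exactly what your proposal concludes one should do, so the two approaches coincide.

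One caveat on the supporting sketch, in case you ever want to flesh it out: the pointwise inequality $\partial_t f \leq |\vec H|^2 f + \vec H\cdot\nabla f$ in your Step~1 cannot be obtained by a ``direct pointwise computation'' comparing with round spheres, because $\vec H$ there is the a priori unknown generalized mean curvature of the Brakke flow, not the curvature of the level sets of your barrier. The standard route (as in Ilmanen) is to complete the square in $\vec H$, giving $|\vec H|^2 f + \vec H\cdot\nabla f \geq -\tfrac{|\nabla f|^2}{4f}$ on $\{f>0\}$, which turns the requirement into the purely geometric condition $\partial_t f + \tfrac{|\nabla f|^2}{4f}\leq 0$; this is what dictates the choice of profile $\eta$ (one needs, e.g., a cubic power to control $\eta'(g)^2/\eta(g)$ near $g=0$), and the $-2mt$ shift then compensates the resulting bound. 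Since you ultimately invoke the literature rather than reproduce the full argument, this does not affect the soundness of the proposal.
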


\begin{lemma}[$L^1$-continuity in time of BV solutions {\cite[Lemma 4.7]{genericMCF}}]
    Let $(\Omega(t))_{t\in [0,T)}$ be a distributional solution to mean curvature flow according to Definition~\ref{Def:BVSol}. Then, there exists a representative of $\Omega(t)$ that is continuous in time in the $L^1$-norm, i.e., for $t\in[0,T)$ and $t_n\to t$ we have $$|\Omega(t_n)\Delta\Omega(t)|\to 0.$$
    \label{Lemma:L1ContBV}
    \vspace*{-1em}
\end{lemma}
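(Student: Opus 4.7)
The plan is to extract time continuity directly from the distributional velocity equation \eqref{Eq:LSdistV} by testing against smooth functions, and then upgrade this weak continuity to strong $L^1$-continuity via BV compactness. First, I would insert a separated test function $\zeta(x,t) = \psi(x)\eta(t)$ with $\psi\in C_c^1(\mathbb{R}^d)$ and $\eta\in C_c^1((0,T))$ into \eqref{Eq:LSdistV}. The boundary term drops out and one obtains, in the sense of distributions on $(0,T)$,
$$\frac{d}{dt}\int_{\Omega(t)}\psi\,dx = -\int_{\mathbb{R}^d}\psi\, V\,d|\mu_{\Omega(t)}|.$$
The Cauchy-Schwarz inequality combined with the uniform perimeter bound \eqref{Eq:LSP} and the $L^2$-bound \eqref{Eq:LSV} on $V$ shows that the right-hand side belongs to $L^2(0,T)$. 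Hence $t\mapsto\int_{\Omega(t)}\psi\,dx$ lies in $H^1(0,T)$ and admits a $\tfrac{1}{2}$-H\"older continuous representative, with modulus bounded by $\|\psi\|_\infty$ times a constant depending only on the perimeter and velocity bounds.

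Next, I would choose a countable family $\{\psi_k\}\subset C_c^1(\mathbb{R}^d)$ that is dense in $C_c(\mathbb{R}^d)$ and apply the previous step to each $\psi_k$. Taking the union of the associated exceptional null sets and redefining $\Omega(t)$ on this null set by the $L^1$-limit of $\Omega(s)$ as $s\to t$ — well-defined because the simultaneous H\"older bounds identify its characteristic function uniquely — produces a representative for which $t\mapsto\int_{\Omega(t)}\psi\,dx$ is continuous for every $\psi\in C_c(\mathbb{R}^d)$. In other words, $\chi_{\Omega(t)}\stackrel{*}{\rightharpoonup}\chi_{\Omega(t_*)}$ in $L^\infty$ as $t\to t_*$.

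The main obstacle is upgrading this weak-$*$ continuity to strong $L^1$-convergence. For this I would invoke BV compactness: along any sequence $t_n\to t_*$, the indicators $\chi_{\Omega(t_n)}$ are uniformly bounded in $\BV_{\mathrm{loc}}$ by \eqref{Eq:LSP}, hence precompact in $L^1_{\mathrm{loc}}$; every subsequential limit must agree with $\chi_{\Omega(t_*)}$ by the weak-$*$ continuity established above, so the whole sequence converges in $L^1_{\mathrm{loc}}$. Since the functions are $\{0,1\}$-valued, this gives $|(\Omega(t_n)\Delta\Omega(t_*))\cap B_R|\to 0$ for every $R>0$. Finally, the isoperimetric inequality combined with the uniform perimeter bound confines the finite-measure side of each $\Omega(t)$ to a set of uniformly bounded mass, yielding a tail estimate uniform in $t$ which allows one to send $R\to\infty$ and conclude $|\Omega(t_n)\Delta\Omega(t_*)|\to 0$.
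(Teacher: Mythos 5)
Your plan of testing \eqref{Eq:LSdistV} with separated $\psi(x)\eta(t)$ and extracting a $\tfrac{1}{2}$-H\"older modulus for $t\mapsto\int_{\Omega(t)}\psi\,\dd x$ via Cauchy--Schwarz and \eqref{Eq:LSP}--\eqref{Eq:LSV} is the right mechanism, and matches what the remark after the lemma alludes to. The problem is the final step. The isoperimetric inequality plus the uniform perimeter bound gives $\min\{|\Omega(t)|,|\Omega(t)^c|\}\leq C$ uniformly in $t$, but a uniform mass bound does \emph{not} yield a uniform tail estimate: a fixed ball translating to spatial infinity has constant measure and perimeter, yet $\sup_t|\Omega(t)\cap B_R^c|$ does not decay in $R$. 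So ``uniformly bounded mass, yielding a tail estimate uniform in $t$'' is a non sequitur, and without tightness, $L^1_{\mathrm{loc}}$-convergence of indicators is genuinely weaker than $L^1$-convergence.

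Fortunately you already recorded the decisive observation: the H\"older modulus you obtained is $\|\psi\|_\infty\cdot C\cdot|t_2-t_1|^{1/2}$, depending on $\psi$ only through $\|\psi\|_\infty$ and not on its support. Taking the supremum over $\psi\in C_c^1(\mathbb{R}^d)$ with $\|\psi\|_\infty\leq1$ and using duality gives directly, for a.e.\ $t_1,t_2$,
\[
|\Omega(t_2)\Delta\Omega(t_1)|=\|\chi_{\Omega(t_2)}-\chi_{\Omega(t_1)}\|_{L^1}=\sup_{\substack{\psi\in C_c^1\\ \|\psi\|_\infty\leq1}}\int\psi\,(\chi_{\Omega(t_2)}-\chi_{\Omega(t_1)})\,\dd x\leq C\,|t_2-t_1|^{1/2}.
\]
(If the left-hand side were infinite, the supremum would also be infinite, so the bound forces $\chi_{\Omega(t_2)}-\chi_{\Omega(t_1)}\in L^1$.) This at once yields $\tfrac12$-H\"older continuity in $L^1$ on the full-measure set of good times, extends to all $t\in[0,T)$ by completeness of $L^1$, and renders the weak-$*$ step, the BV-compactness detour, and the problematic tail estimate superfluous.
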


\begin{remark*}
    This version is a modification of the original lemma using that the proof provides a $\frac{1}{2}$-Hölder continuity in time.
\end{remark*}

Now we can formulate our main result precisely.

\begin{theorem}[Reverse direction -- equivalence of solutions]
\label{Thm:MainThm}
    Let $g$ be a well-prepared initial datum in the sense of Definition~\ref{def:wellprepared}. Assume that, for almost all $s\in\mathbb{R}$, the sets $\Omega_s(t)$ are variational solutions on $(0,T)$ starting from the respective sub level set $\{g<s\}$ of $g$, cf.\ Definition~\ref{Def:VarSol}.
    
      Then there exists a function $v\in C(\mathbb{R}^d\times[0,T))$ whose sub level sets are $\Omega_s(t)$. Moreover, this function is the viscosity solution starting at $g$.
\end{theorem}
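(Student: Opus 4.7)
The plan is to first establish a comparison-type lemma for variational solutions and then apply it crosswise against the sub-level sets of the viscosity solution to identify $v$ with $u$. The backbone is the following comparison lemma: if $\Omega^{(1)}$ and $\Omega^{(2)}$ are two variational solutions whose initial data have disjoint boundaries and satisfy $\Omega^{(1)}(0)\subsetneq \Omega^{(2)}(0)$, then $\Omega^{(1)}(t)\subseteq \Omega^{(2)}(t)$ for every $t$. The avoidance principle of Proposition~\ref{Prop:Avoidance} ensures that the Brakke boundaries $\Sigma^{(i)}(t)=\partial^*\Omega^{(i)}(t)$ remain separated by at least $d(\partial\Omega^{(1)}(0),\partial\Omega^{(2)}(0))>0$ for all $t$. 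Combined with the $L^1$-continuity of Lemma~\ref{Lemma:L1ContBV}, which makes $t\mapsto |\Omega^{(1)}(t)\setminus\Omega^{(2)}(t)|$ continuous with value $0$ at $t=0$, and a topological argument splitting $\mathbb{R}^d$ along the two never-crossing boundaries into open components whose classification cannot flip without the boundaries colliding, this measure must stay zero. This comparison lemma is the main technical obstacle.

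Next I would define the candidate function. Let $S$ be the full-measure set of levels covered by the hypothesis and set
\begin{equation*}
    v(x,t) := \inf\{s\in S : x \in \Omega_s(t)\}.
\end{equation*}
To identify $v$ with the viscosity solution, let $u\in C(\mathbb{R}^d\times[0,T))$ be the viscosity solution with initial datum $g$, which exists by Chen--Giga--Goto~\cite{ChenGigaGoto}. By Theorems~\ref{Thm:ViscBV} and~\ref{Thm:ViscBrakke}, there is a full-measure set $S_u$ such that $U_s(t):=\{u(\cdot,t)<s\}$ is itself a variational solution starting at $\{g<s\}$. Set $T:=S\cap S_u$, which is dense. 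For any $s_1<s_2$ in $T$ the initial data $\{g<s_1\}$ and $\{g<s_2\}$ are strictly nested with disjoint boundaries, so applying the comparison lemma crosswise yields
\begin{equation*}
    \Omega_{s_1}(t) \subseteq U_{s_2}(t) \quad\text{and}\quad U_{s_1}(t) \subseteq \Omega_{s_2}(t) \qquad\text{for all }t\in[0,T).
\end{equation*}

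Finally, to conclude $v=u$, fix $(x_0,t_0)$ and set $a:=u(x_0,t_0)$. For every $s\in T$ with $s>a$ one has $x_0\in U_s(t_0)\subseteq \Omega_{s'}(t_0)$ for all $s'\in T$ with $s'>s$; conversely, for $s\in T$ with $s<a$, $x_0\notin U_s(t_0)$, hence $x_0\notin \Omega_{s'}(t_0)$ for all $s'\in T$ with $s'<s$. Density of $T$ forces $v(x_0,t_0)=a=u(x_0,t_0)$. Thus $v\equiv u$, which in particular implies continuity of $v$. The remaining level-set identity $\{v(\cdot,t)<s\}=\Omega_s(t)$ for almost every $s$ follows by sandwiching $\Omega_s(t)$ between $U_{s-\delta}(t)$ and $U_{s+\delta}(t)$ through the crosswise comparison, letting $\delta\downarrow 0$, and using that $|\{u(\cdot,t)=s\}|=0$ for almost every $s$ by a pigeonhole argument on the uncountably many disjoint level sets.
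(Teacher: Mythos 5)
Your proposal is correct and follows essentially the same route as the paper: the crux in both is the comparison principle for variational solutions (the paper's Lemma~\ref{Lem:CompPrinSet}, whose mechanism you correctly identify as avoidance plus $L^1$-continuity plus a topological argument), which is then applied crosswise against the sub-level sets of the viscosity solution to sandwich $v$ between $u\pm\varepsilon$. The only cosmetic differences are that you define $v$ via an infimum where the paper uses the layer-cake formula, and you phrase the identification $v\equiv u$ pointwise via density of $T$ rather than via the paper's $\varepsilon$-ordering of sub-level sets; note also that the comparison lemma only yields inclusions for a.e.\ $t$ and up to null sets in $x$, so the ``for all $t$'' and pointwise membership claims should be read with good representatives in mind, exactly as the paper handles it.
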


\begin{figure}
    \centering
    \begin{tikzpicture}
    \begin{axis}[
        xlabel={$x$},
        xmin=-2.1, xmax=2.1,
        ymin=-0.5, ymax=4.5,
        xtick={-2,-1,0,1,2},
        major grid style={gray!50,thick},
        minor grid style={gray!20,dotted},
        axis lines=middle,
        enlargelimits=0.1,
        width=0.4\textwidth,
    ]
    \addplot[domain=-2:2, samples=30, color=blue, thick] {x^2};
    \end{axis}
    
    \begin{axis}[
        xlabel={$x$},
        xmin=-2.1, xmax=2.1,
        ymin=-0.5, ymax=4.5,
        major grid style={gray!50,thick},
        minor grid style={gray!20,dotted},
        axis lines=middle,
        xtick={-2,-1,0,1,2},
        enlargelimits=0.1,
        width=0.4\textwidth,
        xshift=0.35\textwidth,
    ]
    \addplot[domain=-sqrt(2.5):sqrt(2.5), samples=30, color=blue, thick] {x^2+1.5};
    \end{axis}
    \begin{axis}[
        xlabel={$x$},
        xmin=-2.1, xmax=2.1,
        ymin=-0.5, ymax=4.5,
        xtick={-2,-1,0,1,2},
        major grid style={gray!50,thick},
        minor grid style={gray!20,dotted},
        axis lines=middle,
        enlargelimits=0.1,
        width=0.4\textwidth,
        xshift=0.7\textwidth,
    ]
    \addplot[domain=-sqrt(2):-1, samples=10, color=blue, thick] {x^2+2};
    \addplot[domain=-1:1, samples=10, color=blue, thick] {3};
    \addplot[domain=1:sqrt(2), samples=10, color=blue, thick] {x^2+2};
    \end{axis}
    
    \end{tikzpicture}
    \caption{A sliced view of the example $u(x,t)$ in Equation~\eqref{Eq:Exu} for times $t=0,t<1$ and $t\geq 1$.}
    \label{Fig:ExSqr}
\end{figure}
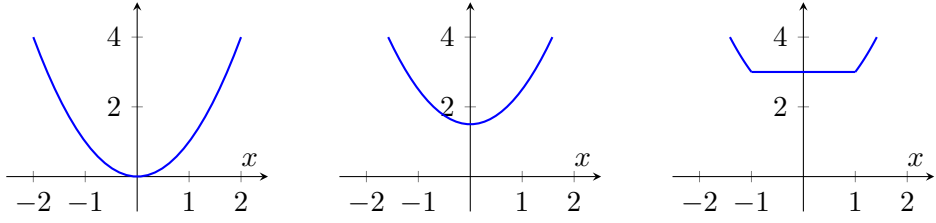

\begin{remark*}
    In fact, besides Brakke's inequality which is essential to our proof, from the distributional solution we only need the $L^1$-continuity in time, i.e., Lemma~\ref{Lemma:L1ContBV}.
    In particular, our main result also applies to the two-phase version of canonical Brakke flows~\cite[Theorem 2.11]{StuvardTonegawa}, or Brakke flows that are also $L^2$-flows~\cite{mugnairoeger} or De Giorgi solutions~\cite{HenselLauxDeGiorgi, LauxOttoThreshMCFDeGiorgiMM}.

    This continuity property in time is necessary since Brakke flows can spontaneously vanish. We demonstrate this with the following example of shrinking spheres that disappear at an instance in time. Consider the function 
    \begin{align}
        \label{Eq:Exu}u(x,t)\coloneqq \begin{cases}
    x_1^2+x_2^2+2t&\text{for }x\in\mathbb{R}^2,t\in [0,1),\\
    \max(x_1^2+x_2^2+2t,3)&\text{for }x\in\mathbb{R}^2,t\in [1,\infty).
    \end{cases}
    \end{align}
    For this function, almost every level set is a Brakke flow, but it is not the unique viscosity solution $x_1^2+x_2^2+2t$. This shows that one does not have a comparison principle or uniqueness for Brakke flows.
\end{remark*}

The main ingredient for the proof of Theorem~\ref{Thm:MainThm} is the following comparison result for variational solutions.

\begin{lemma}[Comparison principle for variational solutions]
\label{Lem:CompPrinSet}
    Let $E(t)$, $F(t)$ be two variational solutions, cf.\ Definition~\ref{Def:VarSol}. Moreover, let them initially be ordered $E_0\subset F_0$ with $d(\partial^*E_0,\partial^*F_0)>0$. Then, for a.e.\ time $t\in (0,T)$ we have
    $$E(t)\subset F(t)\text{ and }d(\partial^*E(t),\partial^*F(t))>0,$$
    which means that the evolutions remain ordered.
\end{lemma}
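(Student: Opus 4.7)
The plan is to combine the Brakke avoidance principle (Proposition~\ref{Prop:Avoidance}) with Brakke's inequality and the structure of finite-perimeter sets.

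\emph{Avoidance and initial data.} Since $\partial^*E(t),\partial^*F(t)$ are Brakke flows with initial distance $d_0 := d(\partial^*E_0,\partial^*F_0) > 0$, Proposition~\ref{Prop:Avoidance} gives
\begin{equation*}
d(\partial^*E(t),\partial^*F(t)) \geq d_0 > 0 \qquad \text{for a.e.\ } t \in (0,T),
\end{equation*}
which is the distance conclusion and ensures $\partial^*E(t)\cap\partial^*F(t)=\emptyset$ throughout. At $t=0$, combined with the inclusion $E_0\subset F_0$, this yields $\partial^*E_0\subset F_0^{(1)}$ and $\partial^*F_0\subset E_0^{(0)}$ $\mathcal{H}^{d-1}$-a.e.: for $y\in\partial^*E_0$, Federer's theorem places the $F_0$-density at $y$ in $\{0,1/2,1\}$; it is at least $1/2$ since $E_0\subset F_0$ and $y$ has $E_0$-density $1/2$, and it is not $1/2$ since $y\notin\partial^*F_0$, so it equals $1$. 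The other inclusion is symmetric.

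\emph{Propagation via Brakke's inequality.} Next I would show
\begin{equation*}
\partial^*E(t)\subset F(t)^{(1)} \quad\text{and}\quad \partial^*F(t)\subset E(t)^{(0)} \qquad \mathcal{H}^{d-1}\text{-a.e.\ for all } t.
\end{equation*}
For the second, I apply Brakke's inequality to $\mu_F$ on $[0,t]$ with a smooth test function $f_\epsilon(x,s)$ obtained by mollifying $\chi_{E(s)^{(1)}}$ jointly in space and time; here the $L^1$-continuity of $E(\cdot)$ from Lemma~\ref{Lemma:L1ContBV} justifies the mollification. The key structural fact is that $\nabla_x f_\epsilon$ and $\partial_s f_\epsilon$ concentrate near $\partial^*E$, which by the avoidance bound lies at distance $d_0$ from $\supp(\mu_F)\subset\partial^*F$; for $\epsilon$ small the coupling terms $\int\vec H\cdot\nabla f_\epsilon\,d\mu_F$ and $\int\partial_s f_\epsilon\,d\mu_F$ vanish, and Brakke's inequality reduces to
\begin{equation*}
\int f_\epsilon(\cdot,t)\,d\mu_F(t) \ \leq\ \int f_\epsilon(\cdot,0)\,d\mu_F(0).
\end{equation*}
Letting $\epsilon\to 0$, the right-hand side tends to $\mathcal{H}^{d-1}(\partial^*F_0\cap E_0^{(1)})=0$ by the base case and the left-hand side to $\mathcal{H}^{d-1}(\partial^*F(t)\cap E(t)^{(1)})$, yielding $\partial^*F(t)\subset E(t)^{(0)}$. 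A symmetric argument for $\mu_E$ with a test function approximating $\chi_{F(s)^{(0)}}$ gives $\partial^*E(t)\subset F(t)^{(1)}$.

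\emph{Conclusion.} Writing $A(t):=E(t)\setminus F(t)$, the structure theorem for intersections of finite-perimeter sets gives
\begin{equation*}
\partial^*A(t) \subset (\partial^*E(t)\cap F(t)^{(0)}) \cup (\partial^*F(t)\cap E(t)^{(1)}) = \emptyset \qquad \mathcal{H}^{d-1}\text{-a.e.,}
\end{equation*}
so $P(A(t))=0$. Hence $\chi_{A(t)}$ is a.e.\ constant on $\mathbb{R}^d$; since $A(t)\subset E(t)$ has finite measure, $|A(t)|=0$, i.e., $E(t)\subset F(t)$ for a.e.\ $t$. The main obstacle is the mollification step in the propagation, because Brakke flows are not Hausdorff-continuous; the proof has to exploit the uniform gap $d_0$ jointly with the $L^1$-continuity of the distributional solutions to verify rigorously that the coupling terms in Brakke's inequality vanish in the limit.
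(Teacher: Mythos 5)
Your strategy shares the paper's two key ingredients (the avoidance principle and Brakke's inequality applied with a cut-off test function supported inside $E$), but there is a genuine gap in the propagation step which you yourself flag at the end, and it is precisely the point the paper spends the most effort on.

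The issue is the claim that ``$\nabla_x f_\epsilon$ and $\partial_s f_\epsilon$ concentrate near $\partial^*E$.'' For the spatial gradient this is fine, since $\chi_{E(s)^{(1)}}$ is locally constant away from $\partial E(s)$ for the good representative. For the time derivative it is not: $\partial_s f_\epsilon$ is supported wherever $s\mapsto\chi_{E(s)^{(1)}}(x)$ varies, and the $L^1$-continuity of Lemma~\ref{Lemma:L1ContBV} only controls $|E(s+h)\triangle E(s)|$, not the location of the symmetric difference. A Brakke flow boundary can, in principle, appear or disappear deep inside $E(s)$, so the time-mollification of $\chi_{E(s)^{(1)}}$ may produce a nonzero $\partial_s f_\epsilon$ far from $\partial^*E(s)$, in particular on $\supp\mu_{F(s)}$; then the coupling term $\int \partial_s f_\epsilon\,d\mu_{F}$ does not obviously vanish. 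The paper resolves exactly this by comparing $\partial E$ to shrinking spheres $\partial B_r(y)$ centred at interior points $y\in\{x\in E: d(x,\partial E)>c_1\}$: the avoidance principle, applied once more to these auxiliary sphere flows, forces such $y$ to remain in the interior of $E$ for a short time, so the interpolation region of the test function stays within the tube $\partial E+B_{c_1}$ where $\mu_F$ vanishes. Without some such mechanism, the propagation step is not closed.

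Two further remarks. First, the paper argues by contradiction at the infimum time $t_*$ of failure (after which the Brakke-inequality and test-function argument only needs to be run on a short interval $[t_*,t_*+h]$), whereas you attempt a direct propagation on $[0,t]$; both are viable once the test function issue is fixed. Second, your concluding step is genuinely different and arguably cleaner than the paper's: you observe that $\partial^*E(t)\subset F(t)^{(1)}$ and $\partial^*F(t)\subset E(t)^{(0)}$ force $\mathcal{H}^{d-1}(\partial^*(E(t)\setminus F(t)))=0$, hence $|E(t)\setminus F(t)|=0$ by finiteness of measure, replacing the paper's topological argument with connected components by a one-line finite-perimeter fact. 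That is a nice alternative, but it does not help with the missing test-function construction, which is where the real content lies.
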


\begin{figure}[!ht]
    \centering
    \includegraphics[width=0.35\linewidth]{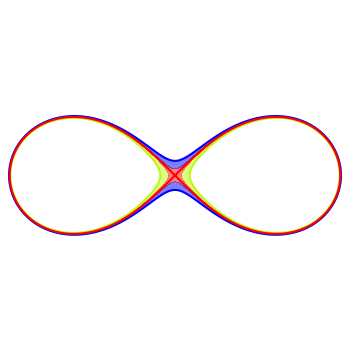}
    \includegraphics[width=0.35\linewidth]{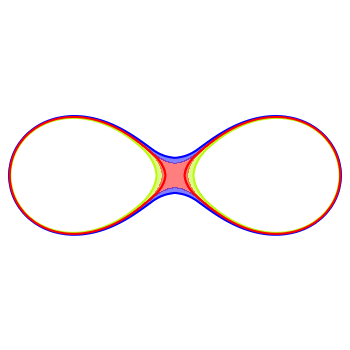}
    \caption{The lemniscate, an example of a curve which admits a non-unique variational solution in $\mathbb{R}^2$.}
    \label{Fig:Fattening}
\end{figure}

\begin{remark}
\label{Rem:Counterexample}
    Note that this result is sharp. Indeed, the conclusion $E(t)\subset F(t)$ does not hold in general if we assume only $E_0\subset F_0$. Take for example the lemniscate, i.e., the $0$ level set of the function $f(x,y)=(x^2+y^2)^2-x^2+y^2$ in $\mathbb{R}^2$ with a suitable cutoff at $-\frac{1}{5}$ and $1$ to ensure that it is well-prepared, see Figure~\ref{Fig:Fattening}. The blue and green solutions mark comparisons to show the maximal and minimal evolutions the curve can take. For the same initial datum, there exist two different variational solutions.
    This example is based on~\cite{angenent2002fattening}.
\end{remark}

\section{Proof of \texorpdfstring{Theorem~\ref{Thm:MainThm}}{the Main Theorem}}

\label{Sec:ProofMainThm}

Now we are in a position to prove our main theorem. The proof uses our comparison principle from Lemma~\ref{Lem:CompPrinSet}. This lemma implies that the evolution remains a function. Theorem~\ref{Thm:ViscBV} and Theorem~\ref{Thm:ViscBrakke} allow us to apply the lemma once more to compare the variational solution to the viscosity solution and we conclude that the evolution has to be the unique viscosity solution.

\begin{proof}[Proof of Theorem~\ref{Thm:MainThm}]
    Let $u$ be the viscosity solution starting at $g$. We show that we can construct a function $v$ from the evolutions $\Omega_s(t), \Sigma_s(t)$ of the (sub) level sets $\{g<s\}, \{g=s\}$ and that this function coincides with $u$.

    We note that for a.e.\ $s\in\mathbb{R}$, $\Omega_s^u(t)=\{u(\cdot,t)<s\}$ is a distributional solution and $\Sigma_s^u(t)=\{u(\cdot, t)=s\}$ can be seen as a Brakke flow. This is due to Theorem~\ref{Thm:ViscBV} and Theorem~\ref{Thm:ViscBrakke}. In the following, we restrict ourselves to the common values $s$ for which we have $\Omega_s^{u}(t), \Sigma_s^{u}(t)$ as these weak evolutions. Additionally, we require that these values are values for which we have $\Omega_s(t), \Sigma_s(t)$ as the evolutions in the theorem. Note that we have for a.e.\ $s,t$ that $\Sigma_s(t)=\partial^*\Omega_s(t)$ and $\Sigma_s^u(t)=\partial^*\Omega_s^u(t)$ by Remark~\ref{Rem:Boundary}. 

    Let $\sigma,s\in\mathbb{R}$ be values as above with $\sigma<s$. We can now apply Lemma~\ref{Lem:CompPrinSet} with $E=\Omega_\sigma(0)$ and $F=\Omega_s(0)$ since $E\subset F$. This shows that the sets remain ordered and thus can be interpreted as sub level sets of a function.
    
    Now, we can construct a function $v$ that is continuous in space and $L^1$-continuous in time via the layer cake formula:
    $$v(x,t)\coloneqq K-\int\limits_{-K}^K \chi_{\Omega_{s}(t)}(x)\dd s$$
    where $K\in \mathbb{R}$ is large enough and constant. Here, the continuity in space is a direct sequence of the comparison principle obtained by Lemma~\ref{Lem:CompPrinSet}: One can compare $v$ with a shifted version of $v$ and even obtain Lipschitz continuity.
    
    The continuity in time follows from Lemma~\ref{Lemma:L1ContBV}, the well-preparedness of $g$ and the estimate
    $$\int\limits_{\mathbb{R}^d}|v(x,t+h)-v(x,t)|\dd x\leq \int\limits_{-K}^K |\Omega_s(t+h)\Delta \Omega_s(t)|\dd s\to 0.$$

    Furthermore, we compare this function $v$ to the viscosity solution $u$ using again Lemma~\ref{Lem:CompPrinSet}. At $t=0$, both functions are the same and thus for all $\varepsilon>0$, we have 
    $$v(\cdot,0)-\varepsilon<u(\cdot,0)<v(\cdot,0)+\varepsilon.$$
    This also implies an ordering of the sub level sets
    $$\Omega_{s-\varepsilon}^{u}(0)\subsetneq \Omega_{s}(0)\subsetneq\Omega_{s+\varepsilon}^{u}(0).$$
    If we now choose $\varepsilon$ in such a way that both, $s$ and $s+\varepsilon$ are as above (resp.\ $s-\varepsilon$), we can apply the comparison principle, Lemma~\ref{Lem:CompPrinSet}, and get that their evolutions remain ordered. Since this is valid for a.e.\ level value, we conclude that the evolutions remain ordered:
    $$v(\cdot,t)\leq u(\cdot,t)+\varepsilon$$
    for almost all times $t\in (0,T)$. One can argue similarly for $v(\cdot, t)$ and $u(\cdot,t)-\varepsilon$ which yields
    $$|u-v|\leq \varepsilon \quad \text{for almost every }t\in(0,T).$$
    Letting $\varepsilon$ go to $0$, we conclude that the evolutions are the same. Thus, $v$ has a representative that is continuous in time.
\end{proof}

Now we prove the key ingredient, Lemma~\ref{Lem:CompPrinSet}. The proof is based on the time-continuity of distributional solutions, Lemma~\ref{Lemma:L1ContBV}, together with the avoidance principle of Brakke flows, Proposition~\ref{Prop:Avoidance}.

The main idea is the following. Since both variational solutions are Brakke flows and have a positive distance, the distance remains positive for all times. This is due to the avoidance principle, Proposition~\ref{Prop:Avoidance}. Thus, it remains to show that they don't overtake each other, mass is created or they change their ordering in any other way. This is done by the continuity coming from the $L^2$-bound of the normal velocity.

\begin{proof}[Proof of Lemma~\ref{Lem:CompPrinSet}]
    The proof is carried out using the following strategy. We prove the assertion via a contradiction and assume that $F$ overtakes $E$, i.e., parts of $E(t)$ lie outside of $F(t)$ for some time $t$. By the continuity in time, Lemma~\ref{Lemma:L1ContBV}, this implies that the boundaries have to intersect. This is shown with the construction of a suitable test function for the Brakke inequality and finished via a topological argument to get the contradiction.

    In the following, we work with good representatives of $E$ and $F$ since they are sets of finite perimeter, cf.~\cite[Proposition 12.19]{maggi2012sets}. By this proposition, one can change a set of finite perimeter up to a null set such that the topological boundary is the support of the Gauss-Green measure, i.e., 
    $$\partial E=\supp \mu_E=\left\{x\in\mathbb{R}^d:\frac{|E\cap B_r(x)|}{\omega_dr^d}\in (0,1)\ \forall r>0\right\}.$$
    
    By assumption, there exists a constant $c_1>0$ such that 
    $$E_0\subseteq F_0\text{ and } d(E_0,F_0)>c_1.$$
    Therefore, we conclude that for all times $t>0$ the distance remains bounded below $$d(E(t),F(t))>c_1$$ by the avoidance principle, cf.~Proposition~\ref{Prop:Avoidance}. Hence, we also have 
    $$\overline E_0\subset F_0.$$

    We assume that this ordering $E\subseteq F$ up to a null set does not remain for all times and show a contradiction. Then, there exists a time $t$ such that $|E(t)\setminus F(t)|>0$. Let $t_*$ be the infimum of these times. To be precise, one would take a sequence $t_{*,k}\nearrow t_*$ since the properties only hold for almost all times but for brevity we don't write this step out. Now without loss of generality we can set this $t_*$ to $0$ for the ease of notation. To indicate this change, we denote the small times after $t_*$ by $h$. With this notation we have $|E(0)\setminus F(0)|=0$. We know $|F(0)\setminus E(0)|>0$.
    
    Moreover, by the definition of $t_*$, there exists $h>0$ arbitrarily small with $|E(h)\setminus F(h)|>0,$ i.e., the ordering fails. Without loss of generality, we can assume that $|F(0)|\geq |E(0)|>0$ as otherwise $E$ would remain empty. Thus, we can choose $h$ small enough such that $|E(h)|,|F(h)|,|E(h)\cap F(h)|$ and $|E(h)\setminus F(h)|$ are positive, cf.\ Lemma~\ref{Lemma:L1ContBV}.
    
    Furthermore, we can define $f_E\in C^1_{t,x}$ such that 
    \begin{align}
        0\leq f_E\leq 1,\quad f_E&\equiv 0\text{ on }E^c,\nonumber\\
        \partial_t f_E,\nabla f_E&\equiv 0\text{ on }\{(x,t):d(x,\partial E(t))>c_1\},\label{Eq:fE}\\
        f_E&\equiv 1\text{ on }\{(x,t)\in E:d(x,\partial E(t))>c_1\}.\nonumber
    \end{align}
    This is done as follows. We define $E_{c_1}^-\coloneqq \{(x,t)\in E:d(x,\partial E(t))>c_1\}$, $E_{c_1}^+\coloneqq \left\{(x,t)\in E^c:d(x,\partial E(t))<c_1\right\}\cup E$ and set $f_E$ to be equal to $1$ on $E_{c_1}^-$ and $0$ on $(E_{c_1}^+)^c$. We now show that there exist two smooth sets $E^\varepsilon_{1/2}$ s.t.\ we have the following ordering for $t\in [0,h]$
    \begin{align*}
        E_{c_1}^-&\subset E_1^\varepsilon\subset E \subset E_2^\varepsilon\subset E_{c_1}^+
    \end{align*}
    with a positive distance of their boundaries. Then, we can smoothly interpolate from $1$ to $0$ between $E^\varepsilon_{1}$ and $E^\varepsilon_{2}$.

    First, we define sets whose boundaries are parallel sets to the boundary of $E$. These can also be seen as the boundary of the (space-time) tubular neighborhood of the boundary of $E$. The sets are $\widetilde E_1=E\setminus \{(x,t)\in E: d((x,t),\partial E)<\delta\}$ and $\widetilde E_2=\{(x,t)\in E^c: d(x,\partial E(t))<\delta\}\cup E$.

    By definition, we have $\widetilde E_1\subset E$ and the distance of $\partial \widetilde E_1$ to $\partial E$ is precisely $\delta$. Analogously, $E\subset \widetilde E_2$ and for all $x\in\partial \widetilde E_2$, we have $d(x,\partial E)=\delta$.

    The boundaries $\partial\widetilde E_{1/2}$ are on each side of $\partial E$ and are sets of finite perimeter by \cite[Theorem 3.1]{ChambolleTubularNbhd} since $\overline{E}$ is compact.
    
    Now, we show that $\partial \widetilde E_1$ and $\partial E_{c_1}^-$ have a distance that is bounded uniformly from below. The similar assertion for $\partial \widetilde E_2$ holds analogously.
    
    For this, we compare the evolution of $\partial E$ in time with the evolution of spheres of radius less than $c_1$ centered at $\partial E_{c_1}^-$, cf.\ Lemma~\ref{Lem:CompPrinSet}. This implies that there exist $\widetilde t>\delta$ (for $\delta$ chosen sufficiently small only depending on $c_1$ and the dimension $d$) and $\widetilde c<c_1-\delta$ such that for all $\tau<\widetilde t$
    $$\partial E(t+\tau)\subset \partial E(t)+B_{\widetilde c}.$$
    Thus, by the definition of $E_{c_1}^-$ and the triangle inequality, we also have 
    $$d(\partial E(t+\tau), \partial E_{c_1}^-(t))\geq c_1-\widetilde c.$$
    Since $d(\partial\widetilde E_1,\partial E)\leq \delta$, this then implies that for any $x\in \partial \widetilde E_1(t)$ there exists $\tau\leq \delta$ such that $d(x,\partial E(t+\tau))=\delta$. Thus, we have 
    $$d(x,\partial E_{c_1}^-(t))\geq d(\partial E(t+\tau),\partial E_{c_1}^-(t))-d(x,\partial E(t+\tau))\geq c_1-\widetilde c-\delta>0.$$
    Hence, we obtain a uniform lower bound of the distance between the boundaries $\partial \widetilde E_1$ and $E_{c_1}^-$.

    In case the original $t^*$ is too small to allow for the times $t^*-\delta$, we extend the evolution of $E$ as a constant for times less than $0$.

    Since all boundaries have a positive distance and $\widetilde E_{1/2}$ are sets of finite perimeter in space and time, we can approximate $\widetilde E_{1/2}$ by smooth sets $E^\varepsilon_{1/2}$ satisfying our conditions.

    Hence, we can smooth these sets out with a mollifying kernel. For the space-time ball $B^{d+1}_1$, consider $\varphi\in C_c^\infty(B^{d+1}_1), \int\int \varphi\dd x\dd t=1, \varphi\geq 0$ and define $\varphi_\varepsilon\coloneqq \frac{1}{\varepsilon^{d+1}}\varphi\left(\frac{\cdot}{\varepsilon}\right)$. Then by~\cite[Theorem~13.8]{maggi2012sets}, the sets $E_{1/2}^\varepsilon\coloneqq \{\varphi_\varepsilon*\chi_{\widetilde E_{1/2}}\geq s_\varepsilon\}$ for suitable $s_\varepsilon\in (0,1)$ converge to $\widetilde E_{1/2}$ in measure and are smooth.
    Moreover, since $\supp \varphi_\varepsilon \subset B^{d+1}_\varepsilon$, we have $\partial  E^\varepsilon_{1/2} \subset \partial \widetilde E_{1/2}+B^{d+1}_\varepsilon$. Hence, for $\varepsilon$ small enough, the ordering from above remains. This now allows to smoothly interpolate $f_E$ from 1 in $E_1^\varepsilon$ to $0$ outside of $E_2^\varepsilon$.

    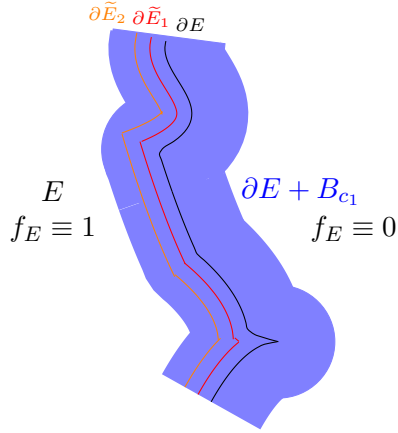
\begin{figure}
        \centering
        \begin{tikzpicture}
          \draw[scale=4, domain=-0.2:0, smooth, variable=\x, line width=1.5cm, mark=circle, blue!50]  plot ({-0.5*sqrt(-\x)},{\x});
          \draw[scale=4, domain=0:0.5, smooth, variable=\x, line width=1.5cm, blue!50]  plot ({max(-0.5*sqrt(\x),-2*\x*\x-0.1)},{\x});
          \draw[scale=4, domain=0.5:1, smooth, variable=\x, line width=1.5cm, blue!50]  plot ({max(-0.5*sqrt(\x),sin(deg(5*(\x-1.5)))+tan(deg(\x-1.5))+cos(deg(3*(\x-1.5)))+0.7)},{\x});
    
          \filldraw [blue!50] (0,0) circle (0.75cm);
          \filldraw [blue!50] (-1.6,2.56) circle (0.75cm);
          \filldraw [blue!50] (-2.196,4.16) -- (-2.196,4.16) arc(170:355:0.75cm) --cycle;

          {
            \draw[scale=4, domain=-0.2:-0.055, smooth, variable=\x, red]  plot ({-sqrt(-\x)/2-0.075*4*sqrt(-\x)/(sqrt(1-16*\x)},{\x+0.075*1/sqrt(1-16*\x)});
            \draw[scale=4, domain=0.055:0.06, smooth, variable=\x, red]  plot ({-sqrt(\x)/2-0.075*4*sqrt(\x)/(sqrt(1+16*\x)},{\x-0.075*1/sqrt(1+16*\x)});
            \draw[scale=4, domain=0.01:0.307, smooth, variable=\x, red]  plot ({-2*\x^2-0.1-0.075*1/(sqrt(1+16*\x*\x)},{\x-0.075*4*\x/sqrt(1+16*\x*\x)});
            \draw[scale=4, domain=0.28:0.7, smooth, variable=\x, red]  plot ({-sqrt(\x)/2-0.075*4*sqrt(\x)/(sqrt(1+16*\x)},{\x-0.075*1/sqrt(1+16*\x)});
            \draw[scale=4, domain=0.607:1, smooth, variable=\x, red]  plot ({sin(deg(5*(\x-1.5)))+tan(deg(\x-1.5))+cos(deg(3*(\x-1.5)))+0.7-0.075*1/sqrt(1+(5*cos(deg(5*(\x-1.5)))+1+(tan(deg(\x-1.5)))^2-3*sin(deg(3*(\x-1.5))))^2)},{\x+0.075*(5*cos(deg(5*(\x-1.5)))+1+(tan(deg(\x-1.5)))^2-3*sin(deg(3*(\x-1.5))))/sqrt(1+(5*cos(deg(5*(\x-1.5)))+1+(tan(deg(\x-1.5)))^2-3*sin(deg(3*(\x-1.5))))^2)}) node[above left] {\tiny$\partial \widetilde E_1$};
          }

          {
            \draw[scale=4, domain=-0.2:0, smooth, variable=\x, orange]  plot ({-sqrt(-\x)/2+0.075*4*sqrt(abs(\x))/(sqrt(1+16*abs(\x))},{\x-0.075/(sqrt(1+16*abs(\x))});
            \draw[scale=4, domain=0:0.02, smooth, variable=\x, orange]  plot ({-sqrt(\x)/2+0.075*4*sqrt(abs(\x))/(sqrt(1+16*abs(\x))},{\x+0.075/(sqrt(1+16*abs(\x))});
            \draw[scale=4, domain=-90:90, smooth, variable=\x, orange]  plot ({0.075*cos(\x)},{0.075*sin(\x)});
            \draw[scale=4, domain=0.065:0.28, smooth, variable=\x, orange]  plot ({-2*\x^2-0.1+0.075*1/(sqrt(1+16*\x*\x)},{\x+0.075*4*\x/sqrt(1+16*\x*\x)});
            \draw[scale=4, domain=0.3:0.58, smooth, variable=\x, orange]  plot ({-sqrt(\x)/2+0.075*4*sqrt(\x)/(sqrt(1+16*\x)},{\x+0.075*1/sqrt(1+16*\x)});
            \draw[scale=4, domain=0.665:1.0, smooth, variable=\x, orange]  plot ({sin(deg(5*(\x-1.5)))+tan(deg(\x-1.5))+cos(deg(3*(\x-1.5)))+0.7+0.075*1/sqrt(1+(5*cos(deg(5*(\x-1.5)))+1+(tan(deg(\x-1.5)))^2-3*sin(deg(3*(\x-1.5))))^2)},{\x-0.075*(5*cos(deg(5*(\x-1.5)))+1+(tan(deg(\x-1.5)))^2-3*sin(deg(3*(\x-1.5))))/sqrt(1+(5*cos(deg(5*(\x-1.5)))+1+(tan(deg(\x-1.5)))^2-3*sin(deg(3*(\x-1.5))))^2)}) node[above right] {\tiny$\partial \widetilde E_2$};
          }
        
          \draw[scale=4, domain=-0.2:0, smooth, variable=\x, black]  plot ({-0.5*sqrt(-\x)},{\x});
          \draw[scale=4, domain=0:0.5, smooth, variable=\x, black]  plot ({max(-0.5*sqrt(\x),-2*\x*\x-0.1)},{\x});
          \draw[scale=4, domain=0.5:1, smooth, variable=\x, black]  plot ({max(-0.5*sqrt(\x),sin(deg(5*(\x-1.5)))+tan(deg(\x-1.5))+cos(deg(3*(\x-1.5)))+0.7)},{\x}) node[above] {\tiny$\partial E$};
          \node at (-3,1.5) {$f_E\equiv 1$};
          \node at (-3,2) {$E$};
          \node at (1,1.5) {$f_E\equiv 0$};
          \node [blue] at (0.3,2) {$\partial E+B_{c_1}$};
        \end{tikzpicture}
        \vspace*{-2em}
        \caption{Visualization of a time slice of the interpolation construction.}
        \label{fig:placeholder}
    \end{figure}

    Using this function $f_E$ in the definition of Brakke flows for $\partial F$, by~\eqref{Eq:fE} we derive (the measure of the varifold agrees with the total variation measure of the Gauss-Green measure)
    $$\int\limits_{\mathbb{R}^d}f_E(\cdot, h)\dd|\mu_{F(h)}|+\int\limits_0^h\int\limits_{\mathbb{R}^d}|\vec{H}_{F(t)}|^2f_E(\cdot, t)\dd|\mu_{F(t)}|\dd t\leq \int\limits_{\mathbb{R}^d}f_E(\cdot, 0)\dd|\mu_{F(0)}|=0.$$
    The right-hand side is zero as $0\leq f_E\leq \chi_E$ and $\overline{E}(0)\subseteq F(0)$ which implies that $f_E$ is supported away from $\partial F$. This implies that
    $$\int\limits_{\mathbb{R}^d}f_E(\cdot, h)\dd|\mu_{F(h)}|=0.$$
    This means that there is no boundary of $F(h)$ in $E(h)$. This can be seen as follows. By the distance between the boundaries coming from the avoidance principle, Proposition~\ref{Prop:Avoidance}, the boundary of $F(h)$ cannot be $c_1$-near the boundary of $E(h)$. Furthermore, outside of this neighborhood to the boundary of $E(h)$, $f_E$ is equal to $1$. If there would be a point $x\in \{x\in E(h): d(x,\partial E(h))>c_1\}$ of $\partial F(h)$, we can find an $r$ s.t.\ $B_r(x)$ is contained in $E(h)$. Thus, we would have a density of $F(h)$ in $B_r(x)$ between $0$ and $1$ around this point since we chose a good representative for $F$. This in turn would imply that the integral is positive by the (relative) isoperimetric inequality. Thus, we conclude
    $$\partial F(h)\cap E(h)=\emptyset.$$

    Analogously, we have $\partial E(h)\cap F(h)^c=\emptyset$ since for $f_F$ similar to above with exchanged roles for $E(h)$ and $F(h)$, we have 
    $\int_{\mathbb{R}^d}f_F(\cdot, h)\dd|\mu_{E(h)}|=0$
    and thus there cannot be a point of the reduced boundary of $E(h)$ in $F(h)^c$.

    Thus, we have shown under our assumption that $\partial E(h)\cap F(h)^c=\emptyset$, $\partial F(h)\cap E(h)=\emptyset$, $|E(h)\setminus F(h)|>0, |E(h)\cap F(h)|>0$. Now, we can choose an $x\in E(h)\setminus F(h)$ and look at the component $Z^{E(h)}_x$ of $E(h)$ containing $x$. Since this component is connected and $F(h)$ has no boundary in $E(h)$, we have that $Z^{E(h)}_x\cap F(h)$ has to be empty or equal to $Z^{E(h)}_x$. The latter is not possible by the existence of $x$. Thus, every component of $E(h)$ is either contained in $F(h)$ or disjoint with $F(h)$ and there exists at least one disjoint component $Z^{E(h)}_x$. Furthermore, we can look at the component of $x$ in $F(h)^c$, $Z^{{F(h)}^c}_x$. Since $E(h)$ has empty boundary in $F(h)^c$, we have that $E(h)\cap Z^{F(h)^c}_x$ has to be empty or equal to $Z^{F(h)^c}_x$. Because of $x$ it cannot be empty. If it were the whole set, i.e., $Z^{F(h)^c}_x\subset E(h)$, there would be boundary of $F(h)^c$ and thus boundary of $F(h)$ in $E(h)$.
    
    This proves our contradiction and thus the ordered sets $E$ and $F$ remain ordered.
\end{proof}

Thus, we now have shown the equivalence of these weak solution concepts. Namely, given a well-prepared initial datum, it is equivalent to say that an evolution is the unique viscosity solution and to say that almost every sub level set is a variational solution. This connects the solution concepts based on gradient flows to the comparison based concepts and gives the reverse direction to the results of \cite{ESIV, genericMCF}.

\section{Generic uniqueness of variational solutions}
Our methods also allow us to prove the generic uniqueness of variational solutions. This is done by comparing the given solution to the variational solution coming from the viscosity solution.

\begin{theorem}[Uniqueness of variational solution]
    \label{Thm:UniqueVarSol}
    Let $g$ be a well-prepared initial datum in the sense of Definition~\ref{def:wellprepared}. Then, almost every level set has a unique variational solution.
\end{theorem}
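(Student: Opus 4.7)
The plan is to use the level sets of the viscosity solution as a canonical reference family and sandwich any competing variational solution between nearby reference ones via Lemma~\ref{Lem:CompPrinSet}. Concretely, let $u \in C(\mathbb{R}^d \times [0, T))$ denote the viscosity solution starting at $g$ and set $\Omega^u_s(t) \coloneqq \{u(\cdot, t) < s\}$. By Theorem~\ref{Thm:ViscBV}, Theorem~\ref{Thm:ViscBrakke}, and Remark~\ref{Rem:Boundary}, there is a set $S_u \subset \mathbb{R}$ of full measure such that $(\Omega^u_s(t))_t$ is a variational solution for every $s \in S_u$. Given any other variational solution $\tilde\Omega_s$ with initial datum $\{g < s\}$ for some $s \in S_u$, the aim is to prove $\tilde\Omega_s(t) = \Omega^u_s(t)$ modulo null sets for a.e.\ $t \in (0, T)$.

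The next step is to fix such an $s$ and pick sequences $s_n^- \nearrow s$ and $s_n^+ \searrow s$ inside $S_u$. Since $g \in C^3$ is constant outside some large ball, the distinct level sets $\{g = s\}, \{g = s_n^\pm\}$ are compact, closed, and pairwise disjoint, hence at positive distance. Moreover, by the coarea formula applied to $g$, for a.e.\ value the set $\{|\nabla g| = 0\} \cap \{g = s\}$ is $\mathcal{H}^{d-1}$-null, so $\partial^* \{g < s\} = \{g = s\}$ up to $\mathcal{H}^{d-1}$-null sets, and similarly at $s_n^\pm$. Consequently the hypotheses of Lemma~\ref{Lem:CompPrinSet} are met for both pairs $(\tilde\Omega_s, \Omega^u_{s_n^+})$ and $(\Omega^u_{s_n^-}, \tilde\Omega_s)$, yielding
\[
    \Omega^u_{s_n^-}(t) \subseteq \tilde\Omega_s(t) \subseteq \Omega^u_{s_n^+}(t) \quad \text{for a.e.\ } t \in (0, T).
\]

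Letting $n \to \infty$ and using continuity of $u$ together with monotonicity of $s \mapsto \{u(\cdot, t) < s\}$, this sandwich becomes $\{u(\cdot, t) < s\} \subseteq \tilde\Omega_s(t) \subseteq \{u(\cdot, t) \leq s\}$ for a.e.\ $t$. A Fubini argument applied to $\{(x, t, s) : u(x, t) = s\}$, restricted to the large ball outside which $u \equiv g$, shows $|\{u(\cdot, t) = s\}| = 0$ for a.e.\ pair $(s, t)$; after shrinking $S_u$ by this further null set of $s$, the sandwich collapses to $\tilde\Omega_s(t) = \Omega^u_s(t)$ modulo null sets, which is the claimed uniqueness. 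The main obstacle is verifying the positive-distance hypothesis of Lemma~\ref{Lem:CompPrinSet} at the initial time; this is exactly where the well-preparedness of $g$ is used, as smoothness together with constancy outside a large ball forces distinct initial level sets to be compact, disjoint, and hence automatically at positive distance.
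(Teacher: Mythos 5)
Your proposal is correct and follows essentially the same route as the paper's proof: sandwich any competing variational solution $\tilde\Omega_s$ between the viscosity-solution level sets $\Omega^u_{s^\pm_n}$ at nearby levels in $S_u$, invoke Lemma~\ref{Lem:CompPrinSet} for each pair, and pass to the limit. The paper states this more tersely (choosing a sequence of $\varepsilon$ and writing $\bigcup_{\varepsilon>0}\Omega^u_{s-\varepsilon}\subset\Omega\subset\bigcap_{\varepsilon>0}\Omega^u_{s+\varepsilon}$), while you spell out two points it leaves implicit — the positive-distance hypothesis of the lemma at $t=0$ (via compactness and disjointness of the level sets of $g$) and the Fubini argument ensuring $|\{u(\cdot,t)=s\}|=0$ so the sandwich collapses. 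Both are welcome clarifications but do not change the underlying argument.
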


\begin{remark}
    This theorem is optimal in the sense that one cannot drop the genericity assumption, i.e., that the statement holds for almost every level set. This can be seen from the lemniscate, see Figure~\ref{Fig:Fattening}. For the viscosity solution, the level set fattens which is shown in red. Here, different evolutions are feasible as described before in Remark~\ref{Rem:Counterexample}.
\end{remark}

\begin{proof}
    Since $g$ is well-prepared, we can take the unique viscosity solution $u$ and almost every level set of $u$ is a variational solution. Take such a level set value and consider a possibly different variational evolution of the sub level set $\Omega$.
    
    Here, we can apply Lemma~\ref{Lem:CompPrinSet}. We compare $\Omega(t)$ to the variational solution coming from the viscosity solution $\Omega^u_s(t)$. At time $0$ both sets agree. Thus, we can find a sequence of $\varepsilon$ such that 
    $$\Omega\subset \Omega^u_{s+\varepsilon}$$
    for all times $t$ and all $\varepsilon$ and such that $\Omega^u_{s+\varepsilon}$ is a variational solution, similarly for $s-\varepsilon$. Moreover, we again have $\Sigma^u_{s+\varepsilon}=\partial^*\Omega^u_{s+\varepsilon}$ and $\Sigma=\partial^*\Omega$. Together, this implies
    $$\bigcup\limits_{\varepsilon>0}\Omega_{s-\varepsilon}^u\subset \Omega\subset\bigcap\limits_{\varepsilon>0}\Omega_{s+\varepsilon}^u.$$
    Therefore, we conclude $\Omega=\Omega_s^u$ and $\Sigma=\Sigma_s^u$ for all almost every time which especially implies uniqueness of variational solutions.
\end{proof}

Theorem~\ref{Thm:UniqueVarSol} complements the weak-strong uniqueness result of distributional solutions in \cite[Theorem 1]{FHLS}. The latter states in particular that variational solutions are unique before the onset of singularities. Our result shows the uniqueness past generic singularities.

\frenchspacing
\bibliographystyle{plainurl}
\bibliography{References.bib} 
\end{document}